\newtheorem{theorem}{Theorem}
\renewcommand{\le}{\leqslant}
\renewcommand{\ge}{\geqslant}
\DeclareSymbolFont{GreekLetters}{OML}{cmr}{m}{it} %Provide missing letters
\DeclareSymbolFont{UpSfGreekLetters}{U}{cmss}{m}{n} %Provide missing letters
\DeclareMathSymbol{\varrho}{\mathalpha}{GreekLetters}{"25}
\DeclareMathSymbol{\UpSfLambda}{\mathalpha}{UpSfGreekLetters}{"03}
\DeclareMathSymbol{\UpSfSigma}{\mathalpha}{UpSfGreekLetters}{"06}
\newlength{\overwdth}
\newcommand*\bigcdot{\mathpalette\bigcdot@{.7}}
\newcommand*\bigcdot@[2]{\mathbin{\vcenter{\hbox{\scalebox{#2}{$\m@th#1\bullet$}}}}}
\DeclareMathOperator{\var}{var}
\DeclareMathOperator{\opt}{opt}
\newcommand{\reals}{\mathbb{R}}
\newcommand{\cf}{\mathcal{F}}
\newcommand{\ck}{\mathcal{K}}
\newcommand{\real}{\mathbb{R}}
\newcommand{\tran}{\mathsf{\scriptsize T}}
\newcommand{\e}{\mathbb{E}}
\newcommand{\bsa}{\boldsymbol{a}}
\newcommand{\bsx}{\boldsymbol{x}}
\newcommand{\bsz}{\boldsymbol{z}}
\newcommand{\bsone}{\boldsymbol{1}}
\newcommand{\bszero}{\boldsymbol{0}}
\newcommand{\simiid}{\stackrel{\mathrm{iid}}{\sim}}
\newcommand{\toas}{\stackrel{\mathrm{a.s.}}{\to}}
\newcommand{\dunif}{\mathbb{U}}
\newcommand{\dnorm}{\mathcal{N}}
\newcommand{\giv}{\!\mid\!} % 
\newcommand{\prpl}{\text{PrPl}}
\newcommand{\prpleb}{\text{PrPl-EB}}
\newcommand{\eb}{\mathrm{E}}
\newcommand{\ben}{\mathrm{Ben}}
\newcommand{\mc}{\mathrm{MC}}
\newcommand{\rqmc}{\mathrm{RQMC}}
\newcommand{\hk}{\mathrm{HK}}
\newcommand{\rd}{\,\mathrm{d}}
\newcommand{\phz}{\phantom{0}}
\newcommand{\jmp}{\mathrm{jump}}
\newcommand{\knk}{\mathrm{kink}}
\newcommand{\smo}{\mathrm{smooth}}
\newcommand{\fin}{\mathrm{finance}}
\title{Empirical Bernstein and betting confidence intervals for randomized quasi-Monte Carlo}
\date{January 2026}
\author{Aadit Jain\\University of California, San Diego
 \and
 Fred J. Hickernell \\Illinois Institute of Technology
 \and
 Art B. Owen \\ Stanford University
 \and
 Aleksei G. Sorokin \\ University of Chicago
}
\begin{document}
\doublespacing

\maketitle

\begin{abstract}
Randomized quasi-Monte Carlo (RQMC) methods estimate the mean of a random
variable by sampling an integrand
at $n$ equidistributed points. For scrambled digital nets, 
the resulting variance is typically $\tilde O(n^{-\theta})$ 
where $\theta\in[1,3]$ depends on the smoothness of the
integrand and $\tilde O$ neglects logarithmic factors.
While RQMC can be far more accurate than plain Monte Carlo (MC)
it remains difficult to get confidence intervals on RQMC
estimates.  We investigate some empirical Bernstein confidence intervals (EBCI)
and hedged betting confidence intervals (HBCI), both from Waudby-Smith and Ramdas (2024),
when the random variable of interest is subject to known bounds.
When there are $N$ integrand evaluations
partitioned into $R$ independent replicates of $n=N/R$
RQMC points, and the RQMC variance is $\Theta(n^{-\theta})$,
then an oracle minimizing the width of a Bennett
confidence interval would choose $n =\Theta(N^{1/(\theta+1)})$. 
The resulting intervals have
a width that is $\Theta(N^{-\theta/(\theta+1)})$. 
Our empirical investigations had optimal values of
$n$ grow slowly with $N$,
HBCI intervals that were usually narrower than the EBCI
ones, and optimal values of $n$ for HBCI that were equal to or
smaller than the ones for the oracle. 
\end{abstract}

\section{Introduction}

We study the combination of randomized quasi-Monte
Carlo (RQMC) integration to estimate an expectation
with some non-asymptotic (finite sample valid) methods
to get a confidence interval for such an expectation.
RQMC has significant accuracy  benefits over plain Monte Carlo (that
we outline below), but the usual confidence intervals based on
RQMC estimates have justifications that are asymptotic
in their sample size \cite{naka:tuff:2024}. %more fitting than \cite{LEcEtal24a} which we had.
Empirical Bernstein and some related betting
methods (described below) give finite sample assurances for bounded integrands.  Specifically, if it is known that the integrand values must lie
between $0$ and $1$, then those methods can construct a confidence
interval that has at least $1-\alpha$ probability to contain
the integral's value.  This confidence level holds for any integrand
satisfying the given bounds.  

The context for our work is as follows.
Many scientific problems require the numerical 
evaluation of multidimensional
integrals. When the dimension is high enough, then classical tensor product integration rules 
like those in \cite{DavRab84} become too expensive to use.
It is then
common to use plain Monte Carlo (MC) methods instead.  MC
methods based on random sampling typically have root mean
squared errors (RMSEs) of $O(n^{-1/2})$ given $n$
function evaluations.  This rate is slow but it is the
same in any dimension.  MC also allows one to use statistical
methods to quantify the uncertainty in the estimated
integral.  Ordinary quasi-Monte Carlo (QMC) 
methods (e.g.,  \cite{DicPil10a} and \cite{Nie92}) are deterministic
sampling strategies that under a bounded variation assumption
produce integral estimates with error $\tilde O(n^{-1})$ where
$\tilde O(\cdot)$ means we neglect powers of $\log(n)$.
Because plain QMC methods are deterministic, they lose the
uncertainty quantification advantage of MC. RQMC methods, 
surveyed in \cite{LEcLem02a}, allow one
to make independent replicates of a statistically
unbiased QMC method to support variance estimates and
asymptotic confidence intervals.  RQMC methods can also
improve on the convergence rate of QMC methods obtaining
an RMSE of $\tilde O(n^{-3/2})$ under a smoothness assumption
\cite{Owe97,Owe08a} on the integrand.  

In addition to asymptotic confidence interval methods for RQMC,
some other papers have provided finite sample uncertainty quantifications
based on additional assumptions. For purely independent and identically distributed (IID) sampling, knowing a bound on the kurtosis facilitates a finite sample confidence interval for the mean of a random variable in terms of the sample mean and the sample standard deviation \cite{HicEtal14a}.  For deterministic QMC sampling of $\bsx \sim \mathbb{U}[0,1]^d$ with lattices or digital nets, assuming that the coefficients in a suitable orthogonal decomposition of $f : [0,1]^d \to \reals$ decay reasonably, one may derive a deterministic error bound on $\mu  = \mathbb{E}[f(\bsx)]$ \cite{HicJim16a,HicEtal17a,JimHic16a,JagSor23a}.  A different approach is to assume that $f$ is a realization of a Gaussian process and construct a credible interval for $\mathbb{E}[f(\bsx)]$.  If the covariance kernel and the sampling nodes are well-matched, the computational effort required is nearly $O(n)$ and not the $O(n^3)$ effort typically required for such credible intervals \cite{RatHic19a,JagHic22a}.  The GAIL \cite{Gail_ug} and QMCPy \cite{QMCPy2020a} libraries implement  these finite sample uncertainty quantifications under the assumptions outlined above.
%The GAIL \cite{Gail_ug} and QMCPy \cite{QMCPy2020a} libraries provide finite sample uncertainty quantification, so long as one can
%provide some outside information on the integrand.  
%This could be an assumption about a kurtosis quantity.
%These papers \cite{HicJim16a,JimHic16a} use
%assumptions about the way that coefficients in
%a suitable orthogonal decomposition of the integrand decay
%to compute intervals around a QMC estimate that contain the integral.

Here, we study RQMC confidence intervals
assuming that the integrand is bounded between
zero and one.  The advantage of this assumption is that
there are more settings where we are certain that it holds.
%We can then get finite sample confidence
%intervals with coverage of at least 95\% or some other desired level.

The non-asymptotic confidence intervals that we emphasize are the
recently developed hedged betting confidence intervals (HBCI) and some related predictable plug-in empirical Bernstein
confidence intervals (EBCI), both from \cite{WauRam24a}. They are derived
from some infinite confidence sequences and we believe that
they provide the narrowest confidence intervals among
currently available methods.

Section~\ref{sec:notation} gives our notation
and some definitions and background on quasi-Monte
Carlo, randomized quasi-Monte Carlo, and the EBCI and HBCI of \cite{WauRam24a}. In Section \ref{sec:asymptotic}
we show how splitting $N$ observations into $R=N/n$ independent
replicates of $n$ RQMC point sets can give narrower EBCI 
than MC does for an oracle
that knows the RQMC variance at each $n$.  The extent of the
narrowing depends on the RQMC convergence rate which
varies from problem to problem. Perhaps surprisingly,
the more effective RQMC is, the smaller the optimal value
of $n$ becomes.  For an RQMC variance  proportional to $n^{-\theta}$,
taking $n$ proportional to $N^{1/(\theta+1)}$
gives the narrowest intervals.  
The resulting interval widths are
$\Theta(N^{-\theta/(\theta+1)})$ which can be much wider than the 
assumed RQMC
standard deviation of $\Theta(N^{-\theta/2})$.
Section~\ref{sec:finite} makes
some computational investigations.  As predicted by the theory
the best values of $n$ to use grow slowly with $N$.
The empirically best values of $n$ for HBCI are no larger
than the oracle values, and sometimes smaller.
Section~\ref{sec:discussion} has some final comments
and discussion.

\section{Background and notation}\label{sec:notation}

We begin with some notation.
We use $1\{E\}$ to denote a quantity that equals $1$
when expression $E$ holds and is zero otherwise.
The vectors $\bszero$ and $\bsone$ have all components
equal to $0$ and $1$ respectively with a dimension
given by context. We write $a_n = O(b_n)$ as $n\to\infty$ if there exists $C>0$ and $n_0<\infty$ with $|a_n|\le Cb_n$ whenever $n\ge n_0$.
We write $a_n =\Omega(b_n)$ if there exists $C>0$ and $n_0<\infty$ with $a_n\ge Cb_n$ whenever $n\ge n_0$.
We write $a_n = \Theta(b_n)$ if $a_n = O(b_n)$ and $b_n = O(a_n)$.

\subsection{Intervals and martingales}

For $0\le\alpha\le 1$, a $1-\alpha$ confidence interval for a 
parameter $\mu$ is a pair of random quantities $A$ and $B$ with
\begin{align}\label{eq:ci}
\Pr( A\le \mu\le B)\ge 1-\alpha.
\end{align}
A confidence interval is strict when the sides of~\eqref{eq:ci} are equal.
An asymptotic confidence interval is a sequence $(A_n,B_n)$
of random quantities for which
$$
\lim_{n\to\infty}\Pr( A_n\le \mu\le B_n)\ge 1-\alpha.
$$
It is common for asymptotic confidence intervals to
be strict, meaning that this limit equals $1-\alpha$.
The best known example is Student's $t$ confidence
interval for the mean of a distribution with finite
variance, based on a random sample 
from that distribution and justified by the
central limit theorem.  %It is asymptotically strict.
%Fred suggests a paragraph break

The random sequence $(A_n,B_n)$ is a $1-\alpha$ confidence sequence
for $\mu$ if
\begin{align}\label{eq:cs}
\Pr( A_n \le \mu \le B_n,\ \forall n\ge1)\ge 1-\alpha.
\end{align}
Confidence sequences allow us to select a stopping time
$\nu$ based on $\{(A_n,B_n)\mid n\le\nu\}$ and have
at least $1-\alpha$ confidence that $A_\nu\le \mu\le B_\nu$.
We can take the limits to be $\max_{n\le \nu}A_n$
and $\min_{n\le\nu}B_n$.

Martingale theory is one of the main tools for
obtaining confidence sequences.
If random variables $S_i$ for $i\ge1$
satisfy $\e(S_n\giv S_1,\dots,S_{n-1})=S_{n-1}$
then $(S_i)_{i\ge1}$ is a martingale.
If $\e(S_n\giv S_1,\dots,S_{n-1})\le S_{n-1}$
then $(S_i)_{i\ge1}$ is a supermartingale.
Ville's inequality is that for any $\eta>0$
and any nonnegative supermartingale $(S_i)_{i\ge1}$
$$
\Pr\biggl( \sup_{n\ge1}S_n\ge \eta\biggr)\le\frac{\e(S_1)}\eta.
$$
The confidence sequences from \cite{WauRam24a} are derived
by applying Ville's inequality \cite{vill:1939} to nonnegative supermartingales.

The problem we consider is to approximate the finite-dimensional
integral
$$\mu = \int_{[0,1]^d}f(\bsx)\rd\bsx 
=\e(f(\bsx)),\quad\text{for $\bsx\sim\dunif[0,1]^d$}.$$
In MC sampling we take $\bsx_i\simiid \dunif[0,1]^d$ and
estimate $\mu$ by
$$
\hat\mu = \frac1n\sum_{i=1}^nf(\bsx_i).
$$
Let $\sigma^2=\var(f(\bsx))$ for $\bsx\sim\dunif[0,1]^d$.
We assume that $0<\sigma^2<\infty$. In that case the RMSE
of $\hat\mu$ is $\sigma/\sqrt{n}$. For $n\ge2$,
$$s^2 =\frac1{n-1}\sum_{i=1}^n(f(\bsx_i)-\hat\mu)^2$$
is an unbiased estimate of $\sigma^2$. Furthermore
$$
\lim_{n\to\infty}\Pr\Bigl( \sqrt{n}\frac{\hat\mu-\mu}s\le t^{1-\alpha}_{(n-1)}\Bigr) = 1-\alpha
$$
where $t^{1-\alpha}_{(n-1)}$ is the $1-\alpha$ quantile of
Student's $t$ distribution on $n-1$ degrees of freedom.
Then $\hat\mu \pm st^{1-\alpha/2}/\sqrt{n}$ is an
asymptotic $1-\alpha$ confidence interval for~$\mu$.

\subsection{QMC and RQMC}

QMC sampling replaces random points $\bsx_i$ by 
deterministic points that more evenly sample the
unit cube.  There are many ways to quantify that 
property. The most elementary one is the star discrepancy:
$$
D_n^* = D_n^*(\bsx_1,\dots,\bsx_n)=\sup_{\bsa\in[0,1]^d}
\,\biggl| \frac1n\sum_{i=1}^n1\{\bsx_i\in[\bszero,\bsa)\}
-\prod_{j=1}^da_j\biggr|.
$$
Small values of $D_n^*$ show that all anchored boxes $[\bszero,\bsa)$
have very nearly the desired proportion of the $n$ points,
which is the volume of $[\bszero,\bsa)$.
It is possible to choose points $\bsx_1,\dots,\bsx_n$ so
that $D_n^*=\tilde O(n^{-1})$ while plain MC points
have $D_n^*=\tilde O(n^{-1/2})$ \cite{Nie92}.

The improvement in star discrepancy yields an improvement
in integration error via the Koksma-Hlawka inequality \cite{Hic04a}:
\begin{align}\label{eq:koksmahlawka}
|\hat\mu-\mu|\le D_n^*(\bsx_1,\dots,\bsx_n)\times V_{\hk}(f).
\end{align}
As described above $D_n^*$ is a measure of how non-uniform
the points $\bsx_i$ are. The new factor $V_{\hk}(f)$ is the
total variation of $f$ in the sense of Hardy and Krause.
See \cite{Owe05a}. It follows from~\eqref{eq:koksmahlawka} 
that we can get $|\hat\mu-\mu|=\tilde O(n^{-1})$
using QMC. We will refer to this as the BVHK rate.

If we knew $D_n^*$ and $V_\hk(f)$, then~\eqref{eq:koksmahlawka} would provide a
perfect quantification of our uncertainty about $\mu$.
It would be non-asymptotic and even non-probabilistic.
Unfortunately, $D_n^*$ is generally very expensive
to obtain and $V_{\hk}(f)$ is ordinarily far harder
to compute than $\mu$. As a result, equation~\eqref{eq:koksmahlawka}
shows us that MC can be outperformed but it does
not provide a generally usable error estimate.

RQMC methods generate random points $\bsx_i$ that
simultaneously satisfy two properties:
\begin{compactenum}[\quad\bf1)]
\item $\bsx_i\sim\dunif[0,1]^d$, for all $i=1,\dots,n$, and
\item $D_n^*(\bsx_1,\dots,\bsx_n)=\tilde O(n^{-1})$ almost surely.
\end{compactenum}
From the second property, $|\hat\mu-\mu|=\tilde O(n^{-1})$.
As a result the RMSE of RQMC is $\tilde O(n^{-1})$
in the BVHK case.

To get an approximate confidence interval from RQMC
we can form $R$ statistically independent RQMC estimates
$\hat\mu_1,\dots,\hat\mu_R$. Using the first property
above these IID estimates are unbiased and we can get
an asymptotic confidence interval for $\mu$ of the form
$$
\hat\mu \pm S t_{(R-1)}^{1-\alpha/2}/\sqrt{R}
$$
for
$$
\hat\mu = \frac1R\sum_{i=1}^R\hat\mu_i
\quad\text{and}\quad
S^2 = \frac1{R-1}\sum_{i=1}^R(\hat\mu_i-\hat\mu)^2.
$$
Here $S^2/R$ is an unbiased estimate of the MSE
of the pooled estimate $\hat\mu$.
The confidence level $1-\alpha$ is obtained in
the limit as $R\to\infty$ \cite{naka:tuff:2024}
for fixed $n$. That convergence is generally quicker for more nearly
Gaussian $\hat\mu_i$. % We could quote Berry-Esseen but that brings in a third moment.  I think this remark should be enough.

In addition to getting an unbiased estimate of
the RQMC variance, randomization can provide
some other benefits.  First, some RQMC methods
can attain better accuracy than plain QMC.
Under sufficient smoothness, the scrambled  digital sequences
in \cite{Owe95} attain RMSEs of $\tilde O(n^{-3/2})$
\cite{Owe97,Owe08a}. The same holds for the scramblings
of \cite{Mat98}. We will refer to this as the `smooth case'.
Higher order digital nets \cite{Dic11a} can attain even
better convergence rates, though the rates are not
necessarily evident empirically when $d$ is moderately
large, which \cite{nuyens2010higher} attribute to numerical precision.

Those RQMC methods have further
useful properties.  First,  $\var(\hat\mu) = o(1/n)$ as $n\to\infty$
for any integrand with $\sigma^2<\infty$, and so their
efficiency with respect to MC becomes unbounded as $n\to\infty$.
Furthermore, the condition of finite
variation in the sense of Hardy and Krause can be quite
strict. For example, when $d\ge2$, step discontinuities in the integrand
$f$ ordinarily make $V_{\hk}(f)$ infinite unless those discontinuities
are axis parallel \cite{Owe05a}. This makes QMC
unattractive for estimating integrals of binary
valued functions, while RQMC will still attain an
RMSE that is $o(n^{-1/2})$.
%Strictly speaking we need the function to be measurable. But even more strictly, we already assumed that when we assumed that $\mu$ exists.

While there are numerous RQMC methods in use, we
will restrict ourselves here to scramblings of digital nets
and sequences, such as the ones of Sobol' \cite{Sob67}.
Our variance formulas assume that the scramblings 
are either those of \cite{Owe95} or
\cite{Mat98}.  In our computations we used the
linear matrix scramble of \cite{Mat98} with a digital shift.

While RQMC methods yield very good accuracy, most
of the known confidence interval methods for them
supply only asymptotic confidence as $R \to \infty$ \cite{naka:tuff:2024}. 
In some very limited settings
there are central limit theorems for $\hat\mu$
as $n\to\infty$ \cite{Loh01}.
Having the confidence statements be asymptotic in $R$
is unfortunate since the RMSE of $\hat\mu$ 
is proportional to $R^{-1/2}$ while the RMSE
vanishes at a faster rate in $n$.

\subsection{Hoeffding and Empirical Bernstein intervals}

We would like a confidence interval that is non-asymptotic,
meaning that the coverage guarantee~\eqref{eq:ci} holds for  limits $A$ and $B$ computed from a sample of size $n$.
This can often be obtained under parametric statistical
models in which the distribution of $f(\bsx)$  belongs
to a known finite dimensional family such as the Gaussian
distributions.  It is not desirable to make such assumptions
and so we prefer a nonparametric method with non-asymptotic
confidence.  

There is a theorem of Bahadur and Savage \cite{BahSav56} that
reveals sharp constraints on our ability to construct
nonparametric and non-asymptotic confidence
intervals.  Here is the description from \cite{err4qmc}:
% I just now got and fixed the galleys in that article so the following verbatim quote makes sense
\begin{quote}They consider a set $\cf$ of distributions on $\real$.
Letting $\mu(F)=\e(Y)$ for $Y\sim F\in\cf$, their conditions are:
\begin{compactenum}[\quad (i)]
\item For all $F\in\cf$, $\mu(F)$ exists and is finite.
\item For all $m\in\real$ there is $F\in\cf$ with $\mu(F)=m$.
\item $\cf$ is convex: if $F,G\in\cf$ and $0<\pi<1$,
  then $\pi F+(1-\pi)G\in \cf$.
\end{compactenum}
Then their Corollary 2 shows that a Borel set constructed
based on $Y_1,\dots,Y_N\stackrel{\mathrm{iid}}
\sim F$
that contains $\mu(F)$ with probability at least $1-\alpha$
also contains any other $m\in\real$ with probability at least $1-\alpha$.
More precisely: we can get a confidence set, but not a useful one.
They allow $N$ to be random so long as $\Pr(N<\infty)=1$.
\end{quote}

We can escape the restriction from Bahadur and
Savage by stipulating that every distribution in $\cf$
has support in a known interval $[a,b]$ of finite length.
This violates their clause (ii).
After a linear transformation we take that interval
to be $[0,1]$.

We will use $Y_i = \hat\mu_i$ for $i=1,\dots,R$
to represent the values of the $R$ replicated
RQMC points that go into our confidence interval
calculation.
We set $\bar Y_R = (1/R)\sum_{i=1}^RY_i$.  
We assume that $Y_i$ are IID with the same
distribution as $Y$ and that
$0\le Y\le1$ holds with probability one.
Then
\begin{align}\label{eq:hoeffding}
\bar Y_R \pm \sqrt{\frac{\log(2/\alpha)}{2R}}
\end{align}
for $0<\alpha<1$ is a $1-\alpha$ confidence interval for $\mu=\e(Y_i)$.  Equation~\eqref{eq:hoeffding} gives
the confidence interval of Hoeffding \cite{hoef:1963}.  The interval is not always
nested within $[0,1]$, but we can simply take its
intersection with $[0,1]$. That will preserve the confidence level.

While Hoeffding's interval is a valid confidence interval
for the mean of any distribution supported on $[0,1]$,
it can be quite conservative for some
of them, giving confidence over $1-\alpha$. This raises
the possibility of getting coverage $1-\alpha$ from
narrower intervals.  
One approach is to make use of 
$\sigma^2=\var(Y)$ if it is known. 
Hoeffding's confidence interval is a consequence of
Hoeffding's exponential probability bound in \cite{hoef:1963}.
Other exponential probability bounds take advantage of
a known value $\sigma^2=\var(Y)$ or even a sample estimate
of that variance.  Those bounds can be used to get
improved confidence intervals.

Theorem 3 of Maurer and Pontil \cite{maurer2009empirical}
is that
$$
\e(Y) - \frac1R\sum_{i=1}^RY_i \le \sqrt{\frac{2\sigma^2\log(1/\delta)}R}
+ \frac{\log(1/\delta)}{3R}.
  $$
holds with probability at least $1-\delta$.
They call this Bennett's inequality, because it derives
from an inequality in \cite{benn:1962}. 
They do not supply a proof, but it can be proved from Bernstein's
inequality given as Theorem 3 of \cite{bouc:lugo:bous:2003}.
Taking $\delta =\alpha/2$ we get
\begin{align}\label{eq:bennettci}
\bar Y_R \pm \sqrt{\frac{2\sigma^2\log(2/\alpha)}R}
+ \frac{\log(2/\alpha)}{3R}
\end{align}
as a $1-\alpha$ confidence interval for $\mu$.
We will make use of the Bennett confidence
intervals~\eqref{eq:bennettci} for some
theoretical investigations. 

In most practical settings where $\mu$ is unknown,
$\sigma^2$ will also be unknown. Replacing $\sigma^2$
by $s^2$ in~\eqref{eq:bennettci} would not always
give a valid confidence interval.  
Theorem 4 of \cite{maurer2009empirical} shows that
for $R\ge2$
\begin{align}\label{eq:eb}
    \bar Y_R\pm
\sqrt{\frac{2s^2\log(4/\alpha)}R}
+ \frac{7\log(4/\alpha)}{3(R-1)}.
\end{align}
is a $1-\alpha$ confidence interval for $\mu$.
This is called an empirical Bernstein confidence
interval named after an inequality of Bernstein
that is similar to Bennett's.  We see that both terms
on the right hand side of~\eqref{eq:eb} are larger
than the ones in the Bennett interval~\eqref{eq:bennettci}.
Their interval has improved asymptotic constants compared to the original
empirical Bernstein inequality from
\cite{audi:muno:szep:2007}. The EBCI and HBCI
from \cite{WauRam24a} provide a further asymptotic improvement.

\subsection{Betting methods}

Our interest in this problem was sparked in part by some recent
work by \cite{WauRam24a} on confidence intervals generated
by betting arguments.  They consider a more general setting
of confidence sequences, as in equation \eqref{eq:cs}, 
that can then be specialized to confidence intervals as needed.  
For random $Y_i\in[0,1]$ they assume
that $\e(Y_i\giv Y_1,\dots,Y_{i-1})=\mu$ for all $i\ge1$ 
without assuming
that the $Y_i$ are identically distributed or even that
they are independent.
For $i=1$, the conditional expectation 
above is simply $\e(Y_1)=\mu$.

Suppose that the conditional mean is really $\mu$ and 
there is a null hypothesis $H_0(m)$ that
$\e(Y_i\giv Y_1,\dots, Y_{i-1})=m$. Then if $m\ne\mu$
somebody starting with a stake of \$1.00 can
make a series of bets against $H_0(m)$
as the $Y_i$ are revealed in order
and have an expected fortune that grows without bound.
Just prior to time $i$, the bettor picks a value
$\lambda_i = \lambda_i(m)\in(-1/(1-m),1/m)$. 
% I wrote in lambda_i without the m as a synonym in case we use it below in more places.
The bettor's capital at time $t\ge1$ is
$$
\ck_t(m)= \prod_{i=1}^t\bigl(1+\lambda_i(m)(Y_i-m)\bigr).
$$
Taking $\lambda_i(m)>0$ amounts to betting that $Y_i>m$. Then their capital is multiplied by $1+\lambda_i(Y_i-m)$. If they are
right then their capital grows, but if $Y_i<m$, then it shrinks.
To take $\lambda_i(m)<0$ is to bet that $Y_i<m$.

Section 4 of \cite{WauRam24a} notes that $\ck_t(\mu)$ is
a nonnegative martingale, so that $\e(\ck_t(\mu))=1$ for all $t\ge1$.
There is no way for the bettor to pick bet sizes $\lambda_i(\mu)$
to make an expected profit.   Then from Ville's Theorem
$$
\Pr( \ck_t(\mu)\le 1/\alpha,\ \forall t\ge1)\ge1-\alpha.
$$
If we watch $\ck_t(\mu)$ indefinitely, then
there is at most probability $\alpha$ that it will
ever go above $1/\alpha$.  As a result we can get
a confidence sequence by retaining at time $t$,
all the values $m$ for 
which $\max_{1\le\tau\le t}\ck_\tau(m)\le1/\alpha$. 
This idiom uses a hypothetically infinite ensemble of 
bettors to do this, but \cite{WauRam24a} give
implementable algorithms for it.

The other side of the coin is that there are good betting
strategies when $m\ne\mu$.  Under those, the bettor's expected fortune
will grow steadily when $m\ne\mu$. That is what one needs
to make a confidence sequence not just valid but also useful
in having a width that converges to zero with increased sampling.

One version of the betting strategy above produces confidence
sequences that are analogous to Hoeffding intervals
and another produces empirical Bernstein intervals.  We present
the predictable plug-in EBCI followed by a hedged betting interval that
combines two betting strategies.
We do not include the predictable plug-in Hoeffding intervals. Those
do not use a sample variance, and they are 
wider than the others for larger $t$.

The confidence intervals we study make use of running sample moments
$$
\hat\mu_t = \frac{1/2+\sum_{i=1}^tY_i}{t+1}
\quad\text{and}\quad
\hat\sigma^2_t = \frac{1/4+\sum_{i=1}^t(Y_i-\hat\mu_i)^2}{t+1}
$$
with the means and standard deviations 
both biased slightly towards $1/2$.
In their `predictable plug-in empirical Bernstein'
intervals, the betting amounts are
$$
\lambda_t^\prpleb  = \sqrt{\frac{2\log(2/\alpha)}{\hat\sigma^2_{t-1}t\log(1+t)}}\,\wedge c
$$
for some $c\in(0,1)$ with $c=1/2$ or $3/4$
given as reasonable defaults. 
Note that this method only bets that $Y_i>m$.
Theorem 2 of \cite{WauRam24a} 
gives a $1-\alpha$ confidence sequence
\begin{align}\label{eq:purpleinterval}
C_t^\prpleb = \frac{\sum_{i=1}^t\lambda_iY_i}{\sum_{i=1}^t\lambda_i}
\pm \frac{\log(2/\alpha)+\sum_{i=1}^t\nu_i\psi_{\eb}(\lambda_i)}{\sum_{i=1}^t\lambda_i},
\end{align}
where
$$
\nu_i = 4(Y_i-\hat\mu_{i-1})^2
\quad\text{and}\quad \psi_{\eb}(\lambda)
= (-\log(1-\lambda)-\lambda)/4.
$$
The running intersection
$\cap_{1\le\tau\le t}C^\prpleb_\tau$
is also a $1-\alpha$ confidence sequence for~$\mu$ and
of course we can intersect any of these intervals with $[0,1]$.

These confidence sequences can be specialized to confidence
intervals when we have a fixed target sample size $R$
in mind.  For that case \cite{WauRam24a}
recommends
\begin{align}\label{eq:purplelambda}
\lambda_i^{\prpleb(R)} = \sqrt{\frac{2\log(2/\alpha)}{R\hat\sigma^2_{i-1}}}\,\wedge c,\quad i=1,\dots,R.
\end{align}
%Later $R$ will be a number of RQMC replicates.
The above formula for $\lambda_i$ uses the ordering
of the data values and puts unequal weight on the
$R$ different $Y_i$. There is a permutation strategy
in \cite{WauRam24a} to treat the $Y_i$ values more
symmetrically, but they report that it makes little difference.

The EBCI we study take the form
$$
\bigcap_{1\le\tau\le t}C_t^{\prpleb(R)}
$$
where $C_t^{\prpleb(R)}$ is the interval in~\eqref{eq:purpleinterval}
using $\lambda_i$ from~\eqref{eq:purplelambda}.
Equation (17) of \cite{WauRam24a} gives the
scaled half width of these EBCI as
$$
\sqrt{R}\biggl(\frac{\log(2/\alpha)+\sum_{i=1}^R\nu_i\psi_\eb(\lambda_i)}{\sum_{i=1}^R\lambda_i} \biggr)\toas \sigma\sqrt{2\log(2/\alpha)}
$$
as $R\to\infty$.  The corresponding limit for the empirical
Bernstein CIs of \cite{maurer2009empirical}
is $\sigma\sqrt{2\log(4/\alpha)}$.

The hedged betting process of \cite{WacMenSch08} works as follows.
For positive predictable sequences $\lambda_i^+(m)$ and $\lambda_i^-(m)$ defined below, they use capital processes
$$
K_t^+(m) = \prod_{i=1}^t\bigl(1+\lambda_i^+(m)(Y_i-m)\bigr)\quad\text{and}\quad
K_t^-(m) = \prod_{i=1}^t\bigl(1-\lambda_i^+(m)(Y_i-m)\bigr)
% ABO: we were missing right parens.  I make the outer ones a touch bigger too.
$$
that bet, respectively, on $Y_i>m$ and $Y_i<m$.  The hedged process is
$$
K_t^\pm(m) = \max\bigl(\theta K_t^+(m),(1-\theta)K_t^-(m)\bigr)
$$
for $0\le\theta\le1$. For any $\theta$, $K_t^\pm(\mu)$ is a nonnegative martingale
hence subject to Ville's theorem.  The supplement to \cite{WauRam24a}
suggests $\theta=1/2$ as a default.  If $m\ne\mu$ then $K_t^\pm(m)$ will tend to grow with $t$.
Of course the hedging $\theta$ has nothing to do with the RQMC variance
rate $\theta$ and the quantities are distinct enough that no confusion
should arise.

The hedged betting process produces confidence sequences and, as they did
for the empirical
Bernstein processes, they recommend a way to produce a confidence interval
based on $R$ sample values.  For that
they take
$$\lambda_i^+(m)=|\tilde\lambda_i^\pm|\wedge\frac{c}m
\quad\text{and}\quad
\lambda_i^-(m)=|\tilde\lambda_i^\pm|\wedge\frac{c}{1-m}
$$
for $i=1,\dots,R$ where 
$$\tilde\lambda_i^\pm=\sqrt{\frac{2\log(2/\alpha)}{R\hat\sigma^2_{i-1}}}$$
for $c\in[0,1)$, with suggested defaults of $c=1/2$ or $c=3/4$.
The confidence interval is then
$$\bigl\{m\in[0,1]\mid \max_{1\le i\le R}K_i^\pm(m)\le1/\alpha\bigr\}.$$

\section{Asymptotic confidence interval comparisons}\label{sec:asymptotic}
Our confidence intervals will use $R$ IID replicates that each
use $n$ RQMC points in $[0,1]^d$.  That takes $N=nR$
evaluations of $f$. 
Here we study how to get the narrowest confidence
intervals with that budget of $N$ function evaluations.

We take 
$$Y_i=\frac1n\sum_{k=1}^nf(\bsx_{i,k})$$
where for each $i$, $\bsx_{i,1},\dots,\bsx_{i,n}$ is an RQMC
point set. The $Y_i$ are IID when we use independent
scrambles for all $R$ point sets. Let $\sigma^2_n = \var(Y_i)$.
We know that $\sigma^2_n = o(n^{-1}).$
For the BVHK case, $\sigma^2_n=\tilde O(n^{-2})$.  
For the smooth case,
$\sigma^2_n=\tilde O(n^{-3})$.

We know from \cite{WauRam24a} that the widths of their
predictable plug-in EBCI approaches
that of some Bennett intervals described below, as $R\to\infty$
for fixed $n$.
The same is true of their HBCI. Here we
study the value of $n$ that optimizes the Bennett intervals.

As mentioned above, the tilde in $\tilde O$ hides powers of $\log(n)$.
Those powers are present in error bounds for adversarially
chosen integrands but do not seem to come up in
real problems \cite{schl:2002,wherearethelogs}.
We will consider working models of the form
\begin{align}\label{eq:themodel}
\sigma^2_n = \sigma^2_0n^{-\theta},
\end{align}
 even though the actual variance is not
necessarily any power of $n$. This approach is
less cumbersome than bounding the variance between
$\Omega(n^{-\theta})$ and $O(n^{-\theta+\epsilon})$,
and we believe it provides clearer insights.

We know that taking $\theta=1$ underestimates
the quality of RQMC because the variance
is $o(1/n)$ for the RQMC methods we use. 
The smooth case with $\theta = 3$ is optimistic
while the BVHK case with $\theta=2$ is intermediate.

If $R\to\infty$ for fixed $n$, then the predictable
plug-in EBCI
have half widths $H^\prpleb$ that satisfy
\begin{align}\label{eq:purplewidth}
\sqrt{R}H^{\prpleb(R)}\to \sigma_n\sqrt{2\log(2/\alpha)}.
\end{align}
The HBCI have half widths
$H^\prpl$ that satisfy the same rate.
As a result, both methods
give interval widths that are $O(n^{-\theta/2}R^{-1/2})$
as $R\to\infty$.
For a fixed product $N=nR$, this width is narrowest at $R=1$
and $n=N$, but of course that is infeasible and also
that argument ignores the fact that~\eqref{eq:purplewidth}
is based on a limiting argument as $R\to\infty$.

To study the tradeoff between $n$ and $R$, we use Bennett's 
inequality which gives a half width of
\begin{align}\label{eq:bennettwidth}
H^\ben(n,R)=\sqrt{\frac{2\sigma_n^2\log(2/\alpha)}R}
+ \frac{\log(2/\alpha)}{3R}
\end{align}
when using $R$ IID observations that each have variance
$\sigma^2_n$. An oracle that knew $\sigma^2_n$ could
use this formula to select the best $n$ and $R$
for a confidence interval subject to a constraint $nR=N$.
The predictable plug-in EBCI
and the HBCI
from \cite{WauRam24a} do not assume a known variance
for the $Y_i$.
However those intervals attain the same asymptotic limiting width
as $R\to\infty$ that an oracle would get from Bennett's formula. 

Next, we investigate the oracle's choice of $n$ and $R$
to minimize the half width subject to $nR=N$. 
While $n$ and $R$ both have to be positive integers
with product $N$, we will first relax the problem
to a continuous variable $n$ with $R=N/n$.  After that, we
discuss integer solutions.

\begin{theorem}\label{thm:goodn}
Let $\sigma^2_n$ follow~\eqref{eq:themodel} and choose $N>0$.
If $\theta=1$, then the minimizer of~\eqref{eq:bennettwidth} 
over $n\in[1,\infty)$ is $n_*=1$.
If $\theta>1$, then the minimizer of~\eqref{eq:bennettwidth}
over $n\in(0,\infty)$ is 
\begin{align}\label{eq:goodn}
n_*& 
%= \left(\frac{3(\theta-1)\sigma_0
%\sqrt{N}}{\sqrt{2\log(2/\alpha)}}\right)^{2/(\theta+1)}
 = 
\left(\frac{9(\theta-1)^2\,\sigma_0^2
N}{2\log(2/\alpha)}\right)^{1/(\theta+1)}.
\end{align}\end{theorem}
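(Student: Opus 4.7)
The plan is to substitute the constraint $R = N/n$ and the variance model $\sigma_n^2 = \sigma_0^2 n^{-\theta}$ directly into the Bennett half-width formula~\eqref{eq:bennettwidth}, reducing the optimization to a one-variable calculus problem in $n$. Writing $L = \log(2/\alpha)$, this yields
$$
W(n) \;:=\; H^\ben(n,N/n) \;=\; \sqrt{\frac{2\sigma_0^2 L}{N}}\,n^{(1-\theta)/2} \;+\; \frac{L}{3N}\,n,
$$
so the two terms behave as a power of $n$ with exponent $(1-\theta)/2$ plus a linear term in $n$. The entire difficulty of the theorem is to see how this power plays against the linear penalty, and which regime of $\theta$ makes the trade-off nontrivial.

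For the case $\theta = 1$, I would observe that the first term in $W(n)$ is constant in $n$ while the second term is strictly increasing, so $W$ is strictly increasing on $[1,\infty)$ and the minimum over integers (and indeed over reals with $n \ge 1$) is attained at $n_* = 1$. This handles the boundary case essentially by inspection.

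For $\theta > 1$, the first term is strictly decreasing in $n$ (negative exponent $(1-\theta)/2 < 0$) and the second is strictly increasing, so there is a genuine interior minimizer. I would differentiate $W$ and set $W'(n) = 0$, giving
$$
\frac{L}{3N} \;=\; \sqrt{\frac{2\sigma_0^2 L}{N}}\cdot\frac{\theta-1}{2}\,n^{-(\theta+1)/2}.
$$
Solving for $n^{(\theta+1)/2}$ and squaring produces exactly the claimed formula~\eqref{eq:goodn}. To confirm this critical point is the minimum rather than some other stationary point, I would note that for $\theta > 1$ the map $n \mapsto n^{(1-\theta)/2}$ is convex on $(0,\infty)$ and the linear term is also convex, so $W$ is strictly convex on $(0,\infty)$ and the stationary point is the unique global minimizer.

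I do not expect a serious obstacle: the algebra is a one-variable Lagrangian optimization, and convexity gives the minimization claim automatically. The only minor care needed is in the bookkeeping when squaring to isolate $n$ and in tracking the factor $(\theta-1)^2$ produced by that squaring (which is exactly what gives the squared coefficient in the numerator of~\eqref{eq:goodn}), and in noting that the $\theta=1$ case has to be treated separately because the derivative-based argument degenerates when the coefficient $(\theta-1)/2$ vanishes.
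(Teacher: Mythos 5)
Your proposal is correct and follows essentially the same route as the paper's own proof: substitute $R=N/n$ and the variance model into the Bennett half width, dispose of $\theta=1$ by monotonicity, and for $\theta>1$ use convexity plus the vanishing of the derivative to identify the unique minimizer, with the squaring step producing the $(\theta-1)^2$ factor in~\eqref{eq:goodn}. No gaps; the only difference is cosmetic (your explicit strict-convexity remark versus the paper's bare assertion of convexity).
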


\begin{proof}
Under the model~\eqref{eq:themodel}, with $R=N/n$
the oracle's half width is
\begin{align}\label{eq:benh}
H(n) = H^\ben(n,N/n)=\sigma_0n^{(1-\theta)/2}N^{-1/2}
\sqrt{2\log(2/\alpha)}
+\frac{\log(2/\alpha)}{3}\frac{n}N.
\end{align}
If $\theta=1$, then $H(n)$ is minimized over $[1,\infty)$ at $n=1$.

For $\theta>1$, the function $H(n)$ is a convex function of $n$.
It has a unique minimum over $(0,\infty)$ at
some $n_*$ where $H'(n_*)=0$. Now
\begin{align*}
H'(n)=\frac{1-\theta}2\frac{\sigma_0}{\sqrt{N}}n^{-(\theta+1)/2}
\sqrt{2\log(2/\alpha)}  + \frac{\log(2/\alpha)}{3N}.
\end{align*}
This vanishes at 
\begin{align*}
n^{-(\theta+1)/2} = 
\frac{2\log(2/\alpha)/(3N)}
{(\theta-1)\sigma_0
\sqrt{2\log(2/\alpha)/N}}=
\frac{\sqrt{2\log(2/\alpha)}}
{3(\theta-1)\sigma_0
\sqrt{N}}
\end{align*}
from which~\eqref{eq:goodn} follows.
\end{proof}

If $n_*<1$, then $n=1$ is best.
This includes the degenerate case with $\sigma_0=0$.
If $n_*>N$, then $n=N$ is best.
If $1<n_*<N$ and $n_*$ is not an integer then the
best integer $n$ is either $\lfloor n_*\rfloor$
or $\lceil n_*\rceil$ by the convexity noted in the proof.

For scrambled Sobol' points it is generally best to
take $n$ to be a power of 2 \cite{Owe22a}, which is not
reflected in Theorem~\ref{thm:goodn}. We incorporate that restriction
into some numerical examples in Section~\ref{sec:finite}.

As noted above, when $\theta = 1$, the best $n$ is $n=1$. In that case RQMC is the same as MC, using one uniformly distributed point in $[0,1]^d$. This result holds for any sampler that gets the ordinary MC rate, even if it has a favorable constant.  In particular, any MC variance reduction techniques that do not change the convergence rate will not lead to using $n>1$.  They will improve the asymptotic confidence interval width by reducing $\sigma_0$.  

When $\theta>1$, we see that the shortest intervals
come from taking $n = \Theta(N^{1/(\theta+1)})$.
The BVHK rate ($\theta=2$) is then $n=\Theta(N^{1/3})$ while the
rate for smooth integrands ($\theta=3$) is $n=\Theta(N^{1/4})$.
We see that when the RQMC convergence rate becomes better,
the optimal RQMC sample size to use grows at a slower
rate in $N$.  This may seem like a paradox, but the explanation
can be seen in equation~\eqref{eq:bennettwidth}.
If $\sigma_n$ drops very quickly then the $O(1/\sqrt{R})$ term
%NB: that one is O() not Theta() because it might have sigma_0=0
% For now I don't want have O( ) and Theta( ) in the same phrase
it is in does not dominate the other $O(1/R)$ term as much
and a larger $R$ is better.

Because the oracle takes $n$ to be a smaller
power of $N$ for larger $\theta$, we might
wonder why it always picks $n=1$ for the smallest value, $\theta=1$. 
This stems from the factor $\theta-1$ in the constant of proportionality
in \eqref{eq:goodn}.
For fixed $\sigma_0$, $N$ and $\alpha$ we could 
maximize~\eqref{eq:goodn} over $\theta$
to see which rate gives the largest $n$.

What we see from Theorem~\ref{thm:goodn} is that the optimal
$n$ grows slowly with $N$. The variance of the average of $R$
RQMC samples using $n=N/R$ is
$$
\Theta(R^{-1}n^{-\theta}) = \Theta( N^{-1}n^{1-\theta})
= \Theta( N^{-1}N^{(1-\theta)/(\theta+1)})
%= \Theta( N^{-1+(1-\theta)/(\theta+1)})
= \Theta( N^{-2\theta/(\theta+1)}).
$$
By requiring a guaranteed confidence level, rather than an asymptotic one, we
get a higher variance for $\hat\mu$ than $\Theta(N^{-\theta})$
that we could have had with $R=O(1)$.
For $\theta=2$ the variance rises
from $\Theta(N^{-2})$ to $\Theta(N^{-4/3})$
and for $\theta=3$, it rises from
$\Theta(N^{-3})$ to $\Theta(N^{-3/2})$.
Put another way, obtaining a nonasymptotic confidence
level raises the variance by $\Theta(N^{2/3})$
and $\Theta(N^{3/2})$ in those two cases.

While we cannot always know the best value of $n$ to use
in a given setting, Theorem~\ref{thm:goodn} gives us some guidance.
The asymptotic value of $\theta$ may be known from theory.
We may also have experience with similar integrands to make a
judgment about what value of $\theta$ is reasonable in a given
setting, or we can do some preliminary sampling to get a working
value of $\theta$ before constructing a betting confidence interval.
For any choice of $\theta$, that interval will be valid, though possibly 
wider than an interval using the true optimal $n$.
Because $0\le Y_i\le1$, we know that $\sigma_0^2\le 1/4$. Therefore
under the model~\eqref{eq:themodel}, Theorem~\ref{thm:goodn}
provides the guidance to take
\begin{align}\label{eq:guidance}
n\le \Bigl(\frac{9(\theta-1)^2N}{8\log(2/\alpha)}\Bigr)^{1/(\theta+1)}.
\end{align}
For example, with $N=2^{10}$ and $\alpha=0.05$, the bound in~\eqref{eq:guidance}
is always below $7$ whenever $1\le\theta\le3$. The best integer
value is $7$ for some values of $\theta$.
For higher confidence levels (smaller $\alpha$), the optimal
$n$ is smaller still. 

The values of $N$ and $\alpha$ enter the Bennett half width
of~\eqref{eq:benh} only through  the expression 
$$N_\alpha\equiv \frac{N}{\log(2/\alpha)}.$$ 
Thus we can think of smaller $\alpha$ 
as providing a smaller effective budget $N_\alpha$.

\begin{theorem}
Under the conditions of Theorem~\ref{thm:goodn}
with $\theta>1$ and $\sigma_0>0$, 
let $H_{\mc}$ be the half width $H^\ben(1,N)$ using $n=1$
and $H_{\rqmc}$ be the half width $H^\ben(n,N/n)$ using
$n=n_*$ from~\eqref{eq:goodn}. 
Let $N_\alpha=N/\log(2/\alpha)$.
Then for $\theta>1$
\begin{align*}
\frac{H_\rqmc}{H_\mc} 
&= 
\frac{\theta+1}
{\sqrt{{2\sigma^2_0N_\alpha}}+1/3}
\biggl(\frac12
[{3(\theta-1)}]^{1-\theta}
N_\alpha\sigma_0^2\biggr)^{1/(\theta+1)}\\
&=\Theta( N^{\frac{1-\theta}{2\theta+2}})
\end{align*}
as $N\to\infty$.
\end{theorem}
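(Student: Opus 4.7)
The plan is a direct computation: first get clean closed forms for $H_{\mc}$ and $H_{\rqmc}$, then take the ratio, and finally read off the rate in $N$.

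First I would rewrite $H_{\mc} = H^{\ben}(1,N)$. Setting $n=1$ so that $\sigma_n^2 = \sigma_0^2$ and $R=N$ in equation \eqref{eq:bennettwidth}, and then dividing through by $N_\alpha = N/\log(2/\alpha)$, a one-line manipulation yields
\begin{align*}
H_{\mc} \;=\; \sqrt{\tfrac{2\sigma_0^2}{N_\alpha}} + \tfrac{1}{3N_\alpha}
\;=\; \frac{\sqrt{2\sigma_0^2 N_\alpha}+1/3}{N_\alpha}.
\end{align*}
This already matches the denominator of the claimed expression up to the factor $1/N_\alpha$, so the rest of the argument just needs to show that $H_{\rqmc} = (\theta+1)(\tfrac12[3(\theta-1)]^{1-\theta}N_\alpha\sigma_0^2)^{1/(\theta+1)}/N_\alpha$.

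Next I would use the first-order condition from the proof of Theorem~\ref{thm:goodn}. That proof establishes
\begin{align*}
\frac{\sigma_0\sqrt{2\log(2/\alpha)}}{\sqrt{N}}\,n_*^{-(\theta+1)/2}
\;=\; \frac{2\log(2/\alpha)}{3N(\theta-1)},
\end{align*}
which says that the first (square-root) term of $H(n_*)$ in \eqref{eq:benh} equals $\tfrac{2}{\theta-1}$ times the second (linear) term. Summing the two terms gives
\begin{align*}
H_{\rqmc} \;=\; \frac{\log(2/\alpha)}{3N}\cdot\frac{\theta+1}{\theta-1}\,n_*,
\end{align*}
and then substituting $n_* = (9(\theta-1)^2\sigma_0^2 N_\alpha/2)^{1/(\theta+1)}$ from \eqref{eq:goodn}. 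The algebra that matches the stated form is the key reorganization: pull one factor of $[3(\theta-1)]$ out of $(\theta-1)^{-1}$ into the bracketed expression, producing an exponent $2/(\theta+1)-1 = (1-\theta)/(\theta+1)$ on $3(\theta-1)$. Concretely,
\begin{align*}
\frac{n_*}{\theta-1} \;=\; [3(\theta-1)]^{(1-\theta)/(\theta+1)}\bigl(\tfrac{1}{2}\sigma_0^2 N_\alpha\bigr)^{1/(\theta+1)}\cdot\frac{3}{1}\cdot\frac{1}{3}.
\end{align*}
After that, dividing $H_{\rqmc}$ by $H_{\mc}$ causes the factor $1/N_\alpha$ to cancel, leaving exactly the expression claimed in the theorem.

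Finally, for the asymptotic rate I would observe that as $N\to\infty$ (with $\sigma_0,\alpha,\theta$ fixed), $N_\alpha = \Theta(N)$, so the denominator $\sqrt{2\sigma_0^2 N_\alpha}+1/3 = \Theta(N^{1/2})$ and the bracketed factor is $\Theta(N^{1/(\theta+1)})$. Their ratio is $\Theta(N^{1/(\theta+1)-1/2}) = \Theta(N^{(1-\theta)/(2\theta+2)})$, as required. The only slightly delicate step is the exponent bookkeeping in the middle, but no genuine obstacle arises; once the first-order condition is invoked, the rest is routine algebra.
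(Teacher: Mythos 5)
Your proof is correct and reaches the stated identity, but it is organized genuinely differently from the paper's. The paper forms the ratio $H_\rqmc/H_\mc$ first, substitutes $n_*$ from~\eqref{eq:goodn} into \emph{both} the $n^{(1-\theta)/2}$ term and the $n/3$ term of the numerator, and then combines the two resulting powers of $3(\theta-1)$; the factor $\theta+1$ only emerges at the very end from the collapse $2+(\theta-1)=\theta+1$. You instead exploit the first-order condition $H'(n_*)=0$ from the proof of Theorem~\ref{thm:goodn}: at the optimum the square-root term equals $\tfrac{2}{\theta-1}$ times the linear term, so $H_\rqmc=\tfrac{\theta+1}{\theta-1}\cdot\tfrac{n_*}{3N_\alpha}$, and only a single substitution of~\eqref{eq:goodn} is needed. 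This is leaner: the factor $\theta+1$ appears transparently as $\tfrac{2}{\theta-1}+1$, and the exponent bookkeeping is confined to one power of $3(\theta-1)$. What the paper's route buys in exchange is self-containment --- it uses only the closed form of $n_*$, never re-invoking the derivative --- at the price of heavier algebra across two terms. Both arguments obtain the rate $\Theta\bigl(N^{(1-\theta)/(2\theta+2)}\bigr)$ in the same way, by comparing the $\Theta(N^{1/(\theta+1)})$ numerator factor against the $\Theta(N^{1/2})$ denominator.

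One display in your write-up is garbled, though the surrounding prose shows you know what is meant: as written,
\begin{align*}
\frac{n_*}{\theta-1}=[3(\theta-1)]^{(1-\theta)/(\theta+1)}\Bigl(\tfrac12\sigma_0^2N_\alpha\Bigr)^{1/(\theta+1)}\cdot\tfrac{3}{1}\cdot\tfrac{1}{3}
\end{align*}
is false by a factor of $3$, since $\tfrac{3}{1}\cdot\tfrac{1}{3}=1$ while the correct identity is
\begin{align*}
\frac{n_*}{\theta-1}=3\,[3(\theta-1)]^{(1-\theta)/(\theta+1)}\Bigl(\tfrac12\sigma_0^2N_\alpha\Bigr)^{1/(\theta+1)}.
\end{align*}
The intended cancellation is of this lone factor $3$ against the prefactor $\tfrac{1}{3N_\alpha}$ in $H_\rqmc$; stated that way the argument closes and matches the theorem exactly, so this is a typographical slip rather than a gap.
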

\begin{proof}
Using equation~\eqref{eq:benh} in the numerator,
the ratio of these widths is
\begin{align*}
\rho=
\frac{H_\rqmc}{H_\mc}
&=
\frac{n^{(1-\theta)/2}\sqrt{2\sigma^2_0\log(2/\alpha)/N}+n\log(2/\alpha)/(3N)}
{\sqrt{{2\sigma^2_0\log(2/\alpha)}/N}+\log(2/\alpha)/(3N)}\\
&=
\frac{n^{(1-\theta)/2}\sqrt{2\sigma^2_0N_\alpha}+n/3}
{\sqrt{{2\sigma^2_0N_\alpha}}+1/3}.
\end{align*}
From equation~\eqref{eq:goodn}
\begin{align*}
n^{(1-\theta)/2}
%&=
%\left(\frac{3(\theta-1)\sigma_0
%\sqrt{N}}{\sqrt{2\log(2/\alpha)}}\right)^{(1-\theta)/(\theta+1)}\\
&=({3(\theta-1)\sigma_0\sqrt{N_\alpha/2}})^{(1-\theta)/(\theta+1)}
\end{align*}
and we may also write
$n=(3(\theta-1)\sigma_0(N_\alpha/2)^{1/2})^{2/(\theta+1)}$.

So for $\theta>1$, the width ratio is
\begin{align*}
\rho&=
\frac{({3(\theta-1)\sigma_0\sqrt{N_\alpha/2}})^{(1-\theta)/(\theta+1)}\sqrt{2\sigma^2_0N_\alpha}
+(3(\theta-1)\sigma_0(N_\alpha/2)^{1/2})^{2/(\theta+1)}/3}
{\sqrt{{2\sigma^2_0N_\alpha}}+1/3}\\
&=\frac{({3(\theta-1)})^{(1-\theta)/(\theta+1)}2^{\theta/(\theta+1)}
+(3(\theta-1))^{2/(\theta+1)}2^{-1/(\theta+1)}/3}
{\sqrt{{2\sigma^2_0N_\alpha}}+1/3}(N_\alpha\sigma_0^2)^{1/(\theta+1)}\\
%&=[{3(\theta-1)}]^{(1-\theta)/(\theta+1)}2^{-1/(\theta+1)}
%\frac{\theta+1}{\sqrt{{2\sigma^2_0N_\alpha}}+1/3}
%(N_\alpha\sigma_0^2)^{1/(\theta+1)}\\
&=\frac{\theta+1}
{\sqrt{{2\sigma^2_0N_\alpha}}+1/3}
\biggl(\frac12
[{3(\theta-1)}]^{1-\theta}
N_\alpha\sigma_0^2\biggr)^{1/(\theta+1)}
\end{align*}
as required.
\end{proof}

For the BVHK case with $\theta=2$, 
we find that RQMC narrows the widths by $\Theta(N^{-1/6})$.
For the smooth case with $\theta=3$,
we get a more favorable width ratio of $\Theta(N^{-1/4})$.
The MC widths are $\Theta(N^{-1/2})$, and so for the
BVHK case the RQMC widths are $\Theta(N^{-2/3})$
while for the smooth case, they are $\Theta(N^{-3/4})$.

\section{Finite sample evaluations}\label{sec:finite}

Here we present some finite sample evaluations
of the EBCI (specifically the predictable plug-in one) and 
the HBCI, both using RQMC, for 95\% confidence. We begin with finite $N$ and for some integrands with known RQMC variance, we find the $n$ that
minimizes the interval widths from Bennett's inequality.
They are then the exact best $n$ for Bennett's inequality which the
HBCIs and EBCIs of \cite{WauRam24a} match asymptotically.
After that we present finite sample results on the
EBCIs and HBCIs for a variety of integrands differing in smoothness and dimension. The HBCIs are usually narrower.
Finally, we compare the empirically optimal $n$ for the HBCIs to the values that optimize Bennett's inequality. The HCBI widths approach those from Bennett's inequality for large $R$ and we find that the optimal $n$ for HBCI tend to be the same or smaller than the ones we get from Bennett's inequality.

For our HBCI computations, we used $\theta=1/2$. The simulations presented here were run at
\href{https://github.com/aaditj1962161/Betting-Paper-Simulations-for-QMC}{Betting-Paper-Simulations-for-QMC}. The software we used comes from \cite{hedged_algo} which has code to reproduce the figures in \cite{WauRam24a}. It calls the HBCI and predictable plug-in EBCI algorithms from \cite{bet_algo}.
The scrambled Sobol' points we used were from QMCPy \cite{QMCPy2020a}.

\subsection{Semi-empirical evaluations}\label{sec:semiempirical}

We can compute the width of intervals based on Bennett's inequality
for some specific cases.  We saw in Section~\ref{sec:asymptotic}
that the RQMC sample sizes $n$ grow at slow rates
$\Theta(N^{1/3})$ or $\Theta(N^{1/4})$ as the budget $N$ increases.
Here we get a more detailed view of that phenomenon using
the asymptotic formula for some specific variance rates.
We can compute these confidence interval widths for very
large values of $N$ that would be impractical to simulate.

The digital nets of Sobol' \cite{Sob67} under the nested
uniform scramble of \cite{Owe95} have a stratification
property.  The $n$ values of $x_{ij}$ for $i=1,\dots,n$
and any component $j=1,\dots,d$ have one value uniformly
distributed in each interval $[(\ell-1)/n,\ell/n)$
for $\ell=1,\dots,n$ and those $n$ values are mutually
independent. This allows us to work out
the finite sample RQMC variances for
some illustrative examples.  The random linear scramble of \cite{Mat98} combined
with a digital shift attains the same variance as the nested
uniform scramble, but it has some dependencies among the stratified
sample values.  In both cases, the $d$ vectors $(x_{1j},
\dots,x_{nj})\in\real^n$ are mutually independent.

We only consider values of $n$ that are powers of $2$
which is a best practice when using Sobol' sequences \cite{Owe22a}.
The smooth error rate $\tilde O(n^{-3/2})$ for the RMSE
is attainable along powers of $2$
but not for general $n$. An elegant argument due
to Sobol' \cite{sobo:1998} explains why.
He notes that
$$
|f(\bsx_{n+1})-\mu|
%= |(n+1)(\hat\mu_{n+1}-\mu)-n(\hat\mu_n-\mu)|
\le (n+1)|\hat\mu_{n+1}-\mu|+n|\hat\mu_n-\mu|.
$$
A rate of $o(n^{-1})$ would imply that $f(\bsx_{n+1})$ itself
consistently estimates $\mu$. For any sampling
algorithm, that could only be true for a very minimal 
set of functions~$f$.
We also take $N$ to be a power of 2.

For $d=1$ and $f$ of bounded variation in the usual sense,
we get $\var(\hat\mu)\le V_{\hk}(f)^2/n^2$.
This holds because $D_n^*$ for the stratified sample
cannot be larger than $1/n$.
For illustration we choose a function with $V_{\hk}(f)=1$
for which we know exactly what the RQMC variance is
for $n$ a power of $2$.
That integrand is $f(x)=1\{x<1/3\}$.
It is constant within $n-1$ of the intervals $[(\ell-1)/n,\ell/n)$.
All of this function's variation happens 
within just one of those intervals.
By inspecting the base $2$ expansion $1/3=0.010101\cdots$
we can see that the remaining value of $f(x_\ell)$ 
is $1$ or $0$ and it equals
$1$ with a probability that is either $1/3$ if $\log_2(n)$
is an odd integer or is $2/3$ if $\log_2(n)$ is an
even integer. It follows that $\var(\hat\mu_n)=2/(9n^2)$.

Table~\ref{tab:bvhkcase} shows how the width
minimizing value of $n$ increases with $N$ in the BVHK
case with $f(x) = 1\{x<1/3\}$.
These are widths $W=2H^\ben(n,N/n)$ using
equation~\eqref{eq:bennettwidth}, not half widths.
Some are larger than $1$, because the Bennett
width does not reflect that we would intersect the intervals with $[0,1]$.
The increase in $n$ is quite slow. By $N=1024$
the best RQMC sample size is only $n=8$
and this remains the optimal sample size up to $N=4096$.
The value $N^{2/3}W$ very quickly becomes constant.
For non-integer $\theta$ (not shown here) the
scaled widths fluctuate % was fluctuation
within a finite range
instead of approaching a constant.

The result in Table~\ref{tab:bvhkcase} for $N=1024$
is to choose $n=8$ which violates the guidance that
the best $n$ is at most $7$ when $N=1024$ and $\alpha=0.05$.
The reason is that this
table restricts to powers of $2$, and $n=8$ is better
than $n=4$.

\begin{table}[t]\centering
\begin{tabular}{crrcc}
\toprule
   $\log_2(N)$&        $N$ &  $n$ & $W$ & $N^{2/3}W$\\
   \midrule
   \phz0&         1&   1& 5.02e$+$00 &5.020114\\
   \phz4&        16&   2& 7.60e$-$01 &4.826379\\
   \phz7&       128&   4& 1.90e$-$01 &4.826379\\
  10&      1,024&   8& 4.75e$-$02 &4.826379\\
  13&      8,192&  16& 1.19e$-$02 &4.826379\\
  16&     65,536&  32& 2.97e$-$03 &4.826379\\
  19&    524,288&  64& 7.42e$-$04 &4.826379\\
  22&   4,194,304& 128& 1.86e$-$04 &4.826379\\
  25&  33,554,432& 256& 4.64e$-$05 &4.826379\\
  28& 268,435,456& 512& 1.16e$-$05 &4.826379\\
 \bottomrule
\end{tabular}
\caption{\label{tab:bvhkcase}
As $N$ varies for $f(x)=1\{x<1/3\}$, this table shows the
value of $n$ that minimizes the width of Bennett's
interval, along with the minimizing width $W$
and $N^{2/3}W$. Each row corresponds to a value of $N$
for which $n$ has increased.}
\end{table}

Our second example is $f(x)=xe^{x-1}$. This function
is bounded between $0$ and $1$.  We chose it because it
is smooth enough to have $\theta=3$ while also not
a polynomial or an antisymmetric function which would
make the problem artificially easy for some methods. 
Because of the stratification, we know that
\begin{align}\label{eq:varsobol1d}
\var(\hat\mu) =\frac1{n^2}\sum_{\ell=1}^n\var( f(x_\ell))
\end{align}
where $x_\ell\sim\dunif[(\ell-1)/n,\ell/n]$ are independent.
Also
\begin{align}\label{eq:varonesmooth}
\var(f(x_\ell))=n\int_{(\ell-1)/n}^{\ell/n}f(x)^2\rd x
-\Bigl(n\int_{(\ell-1)/n}^{\ell/n}f(x)\rd x\Bigr)^2
\end{align}
which can be computed for finite $n$ using the closed form antiderivatives of $f$ and $f^2$.  
%Using~\eqref{eq:varonesmooth}
%in~\eqref{eq:varsobol1d} we get a value that increases
%rapidly towards the asymptotic expression but differs
%meaningfully for $n\le 8$.
We know that as $n$ increases through powers of 2
$$
\var(\hat\mu) \asymp
\frac1{12n^3}\int_0^1f'(x)^2\rd x = \frac{5-e^{-2}}{48n^3}
$$
but the exact variances are somewhat different for $n\le8$.

Table~\ref{tab:smoothcase} shows how the optimal sample sizes 
$n$ grows with $N$ for our smooth example 
$f(x)=xe^{x-1}$.
An RQMC sample size of $n=8$ becomes
optimal for $N=4{,}096$ and remains optimal up
to $N=32{,}768$.  The values $N^{3/4}W$
settle down rapidly as $N$ increases.
Consistent with the results of
Section~\ref{sec:asymptotic}, the values $n$
grow at a slower rate with $N$ than in the
BVHK case while the widths $W$ decrease at
a faster rate.

\begin{table}\centering
\begin{tabular}{crrcc}
\toprule
 $\log_2(N)$ &  $N$ &  $n$ & $W$ & $N^{3/4}W$ \\
\midrule
  0&         1&   1& 4.00e$+$00& 4.003728\\
  4&        16&   2& 5.17e$-$01& 4.138086\\
  8&       256&   4& 6.52e$-$02& 4.175717\\
 12&      4,096&   8& 8.17e$-$03& 4.185407\\
 16&     65,536&  16& 1.02e$-$03& 4.187848\\
 20&   1,048,576&  32& 1.28e$-$04& 4.188459\\
 24&  16,777,216&  64& 1.60e$-$05& 4.188612\\
 28& 268,435,456& 128& 2.00e$-$06& 4.188651\\
\bottomrule
%      m         N   n            w  N^3/4w 
%[1,]  0         1   1 4.003728e+00 4.003728
%[2,]  4        16   2 5.172607e-01 4.138086
%[3,]  8       256   4 6.524558e-02 4.175717
%[4,] 12      4096   8 8.174624e-03 4.185407
%[5,] 16     65536  16 1.022424e-03 4.187848
%[6,] 20   1048576  32 1.278216e-04 4.188459
%[7,] 24  16777216  64 1.597829e-05 4.188612
%[8,] 28 268435456 128 1.997304e-06 4.188651
% old numbers for f(x) = x.  The same best n
% \toprule
%       $\log_2(N)$&        $N$ &  $n$ & $W$ & $N^{3/4}W$\\
% \midrule
%   \phz0&         1 &  1 &4.03e$+$00 &4.027454\\
%   \phz4&        16 &  2 &5.03e$-$01 &4.027454\\
%   \phz8&       256 &  4 &6.29e$-$02 &4.027454\\
%  12&      4,096 &  8 &7.87e$-$03 &4.027454\\
%  16&     65,536 & 16 &9.83e$-$04 &4.027454\\
%  20&   1,048,576 & 32 &1.23e$-$04 &4.027454\\
%  24&  16,777,216 & 64 &1.54e$-$05 &4.027454\\
%  28& 268,435,456 &128 &1.92e$-$06 &4.027454\\
%  \bottomrule
\end{tabular}
\caption{\label{tab:smoothcase}
As $N$ varies for $f(x)=xe^{x-1}$, this table shows the
value of $n$ that minimizes the width of Bennett's
interval, along with the minimizing width $W$
and $N^{3/4}W$. Each row corresponds to a value of $N$
for which $n$ has increased.}
\end{table}

\subsection{Ridge functions}

While the betting intervals are asymptotically of the
same width as we would get from Bennett's formula, here
we investigate what choices of $n$ and $R$ give the
narrowest interval widths for some finite $N$. 
These computations produce actual intervals
that we then intersect with $[0,1]$.
It is well known that smoothness and dimension affect
the performance of RQMC methods.
We choose to use ridge functions because that makes it
straightforward to vary both the dimension and
smoothness of our test functions.

Let $\Phi$ be the $\dnorm(0,1)$ cumulative distribution function.
Then for $\bsx\sim\dunif[0,1]^d$ the vector
$\bsz=\Phi^{-1}(\bsx)$ componentwise,
has the $\dnorm(0,I)$ distribution over $\real^d$.
Then $\bsone^\tran\bsz/\sqrt{d}\sim\dnorm(0,1)$.
Our ridge function
$$
f(\bsx) =g\Bigl( \frac{\bsone^\tran\Phi^{-1}(\bsx)}{\sqrt{d}}\Bigr).
$$
then has the same mean and variance and differentiability properties
in any dimension $d$.  We choose
\begin{align*}
g_\jmp(v) &= 1\{v\ge1\},\\
g_\knk(v) & = \frac{\min(\max(-2,v),1)+2}3,\\
g_\smo(v) & = \Phi(v+1),\quad\text{and}\\
g_\fin(v) & = \min\Bigl(1,\frac{\sqrt{\max(v+2,0)}}2\Bigr).
\end{align*}

All of the above choices give ridge functions bounded between $0$ and $1$.
They span a very wide range of QMC regularity conditions.
Using $g_\jmp$ makes $f$ discontinuous
and it has infinite variation in
the sense of Hardy and Krause for $d\ge2$. 
Using $g_\knk$ makes $f$ have infinite variation in
the sense of Hardy and Krause for $d\ge3$ and yet
it has been seen to be a more `QMC friendly' integrand
than $g_\jmp$ in high dimensions because it has an
asymptotically bounded mean dimension \cite{mdridge},
meaning that it is dominated by low dimensional 
interactions.  It is continuous, has one weak derivative
but is not differentiable everywhere. Using $g_\smo$ makes $f$ infinitely
differentiable which is generally a very QMC friendly
property.  Using $g_\fin$ gives $f$ a property
similar to some integrands in financial option valuation, namely an
unbounded derivative where $v=-2$.

For each of these four ridge functions, we 
considered budgets $N=2^K$ for $K\in\{8,10,12,14,16\}$
with RQMC sample sizes $n=2^k$ for $k\in\{0,1,2,3,4,5,6\}$
in dimensions $d\in\{1,2,4,16\}$.  All of those
conditions were repeated $20$ times independently. In each
repetition we computed the width of the HBCI, the width of the EBCI, and the width of the CLT window.

The HBCI were narrower than the corresponding
EBCI about 92.1\% of the time.
The CLT intervals were narrower than the corresponding
HBCI 99.6\% of the time. The few times
that the CLT intervals were wider were predominantly
from cases with $n=1$. The EBCI were never narrower than the CLT intervals.
In some cases, the EBCI
had width $1$. This only happened for $N=256$ and
$n=32$ or $64$. Those cases have small replicate
numbers $R=8$ and $R=4$, respectively.

The HBCI are most interesting because they
have non-asymptotic coverage and are usually narrower
than the EBCI.
We made an extensive graphical exploration of their width
versus $n$ (for every combination of dimension, ridge function and $N$).
We similarly explored width versus budget $N$ and width
versus dimension $d$, in each case at all levels of all of the other
variables.  

It was evident that the dimension $d$ made
hardly any difference to the width of the confidence
intervals.  An additive model
using factors equal to the dimension $d$, the RQMC
sample size $n$, the function $f$ and the budget $N$
explained 94.09\% of the variance in the logarithm
of the HBCI width.  That same model without
dimension explained 94.05\%  of this variance.
It was surprising to see no dimension effect
for the HBCI width in our ridge functions.
We knew that the mean and variance of $f(\bsx)$ for
any of those ridge functions does not depend on $d$
but that does not imply that averages over $n$ RQMC
points would be invariant to $d$.
Because $d$ made no material difference to the
widths, we pooled information from all dimensions $d$.

It was clear graphically, and not surprising, that increasing $N$ by a factor of $4$ always narrows the HBCI width by a noticeable amount.
The interesting non-monotone patterns were from the way that the
width depends on $n$ with the other variables fixed. 
Figure~\ref{fig:meanwidths} shows the mean widths of
HBCI versus $n$
for all four ridge functions and all five budgets $N$.

\begin{figure}[t!]
    \centering
    \includegraphics[width=0.9\linewidth]{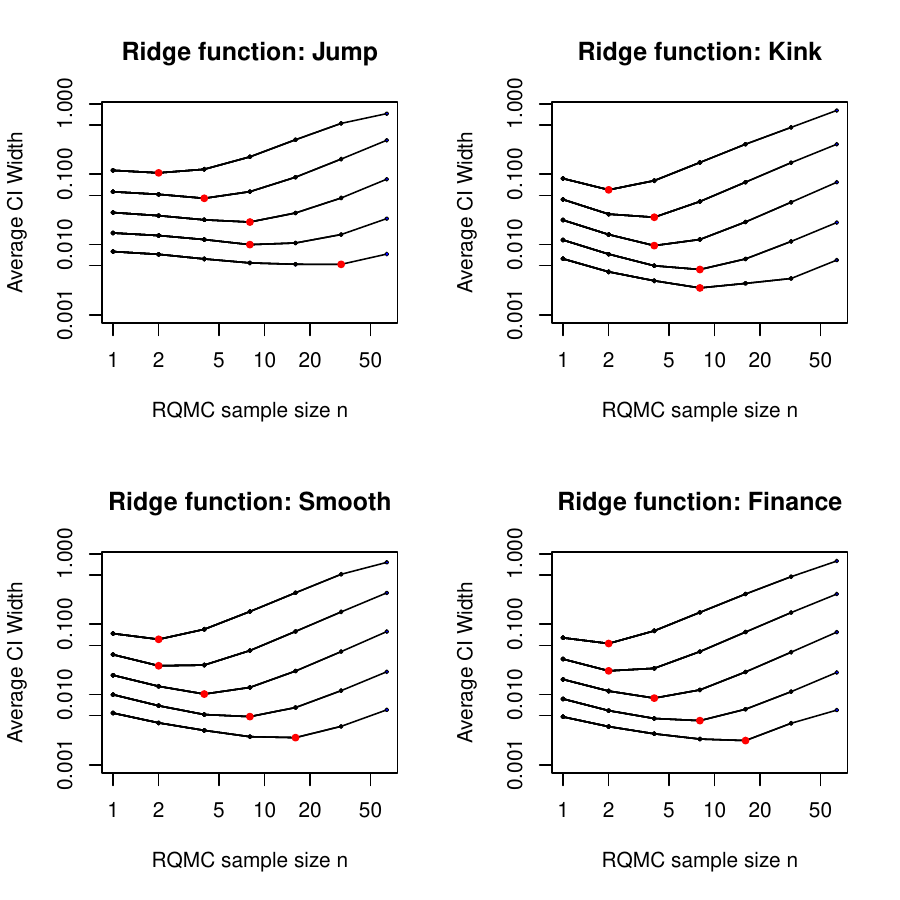}
    \caption{
Mean confidence interval width for the HBCI
as a function of the RQMC sample size $n$.
There is one panel for each of the $4$ ridge functions.
Within a panel the curves are for $N=2^{2r}$
for $r=4,5,6,7,8$ from top to bottom. The smallest
mean width for each $N$ is marked with a larger circle.
The means are taken over all $20$ replicates and
all $4$ dimensions used.
    }
    \label{fig:meanwidths}
\end{figure}

We see in Figure~\ref{fig:meanwidths} some things that
align very well with our analysis and some that
were not anticipated.
The optimal samples sizes $n$ are very small and
they grow slowly with the budget $N$ as expected.
The least smooth ridge function $g_\jmp$ has
the fastest growing $n$ as expected.
The interval widths for $g_\fin$ and $g_\smo$
were very close to each other 
%and those two ridge integrands had identical optimal $n$
despite having very different smoothness
and then they had identical optimal $n$ at each $N$.
For $N=2^{10}$ the optimal $n$ for $g_\knk$
is larger than the one for $g_\smo$ as expected
but at $N=2^{16}$, it is $g_\smo$ that has
the larger optimal $n$.

Our expectations were based on asymptotic
considerations.
First, we used the fact that the HBCI
widths asymptotically approach the optimal
EBCI widths from Bennett's
inequality.  Second, we supposed that the RQMC variance
is asymptotic to $\sigma_0^2n^{-\theta}$ apart from logarithmic
factors. We also used the finite sample Bennett
widths from~\eqref{eq:benh} because
the asymptotic expression $\sigma_n\sqrt{2\log(2/\alpha)/R}$
does not have a meaningful optimum.
The empirical results strongly confirm our
asymptotic findings that small $n$ is best overall,
but among such small $n$, it is less clear
how to choose $n$ based on smoothness of $f$.

\subsection{Functions with known RQMC variance}

Here we return to the functions
from Section~\ref{sec:semiempirical} with
known non-asymptotic RQMC variance.
They allow us to directly
compare the widths of the HBCI to
the value we get from Bennett's formula. We know
that the HBCI have lengths that
asymptotically match Bennett's formula, but
we also want to study how those lengths
approach Bennett's formula in some examples.

Using the known variances, we can compute the
half width from Bennett's inequality~\eqref{eq:bennettwidth}
for our $\sigma^2_n$ and $R=N/n$, double it,
and then divide the average HBCI width of 20 independent trials
by this value. The results are shown  in Figure~\ref{fig:widthstoeb}. For our two example
functions we see that as $R=N/n$ increases (so $n$ decreases
for fixed $N$)
the ratio of widths decreases and approaches the
theoretically expected value of $1$.
At the smallest $R$ (largest $n$) the HBCI were two or three times as wide as the
Bennett intervals for the discontinuous function.
They were somewhat wider for the smooth function.

\begin{figure}[t!]
    \centering
    \includegraphics[width=0.9\linewidth]{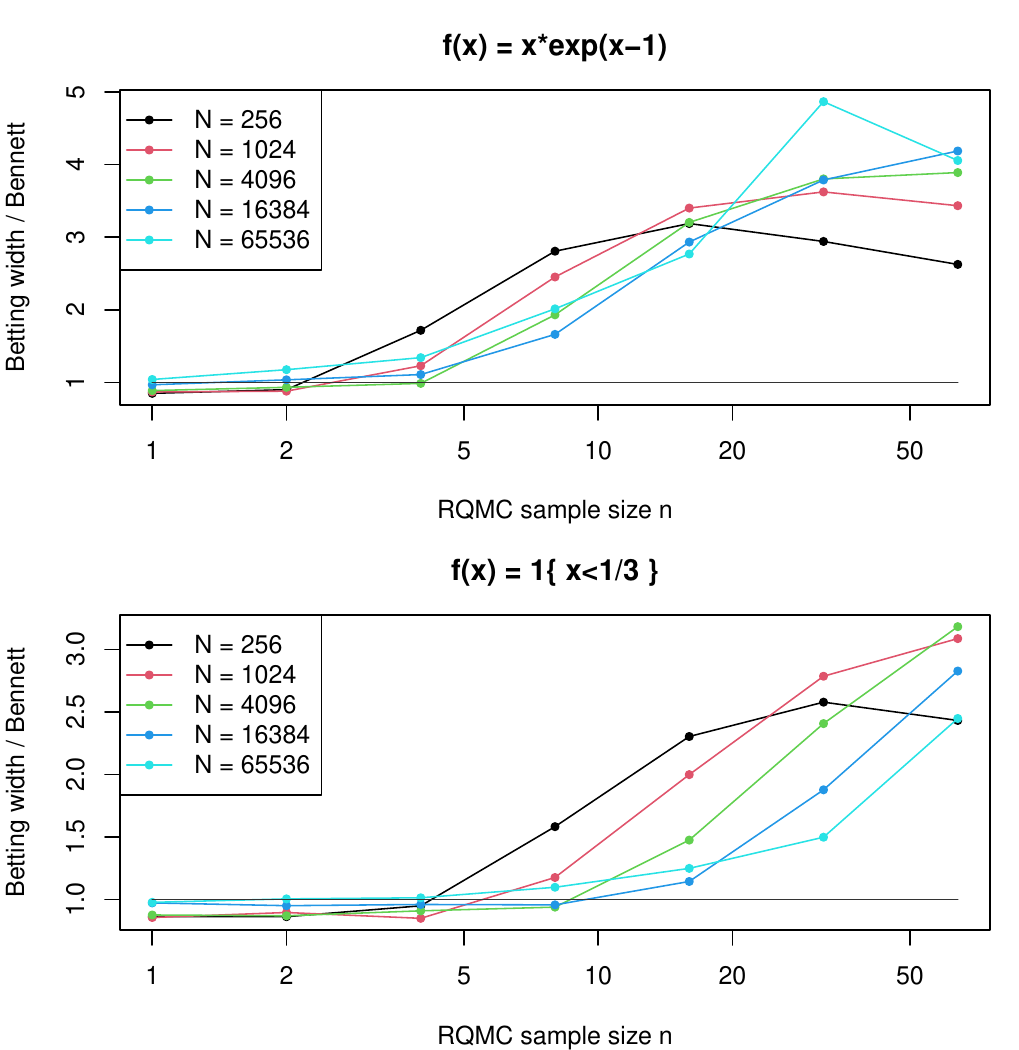}
    \caption{Ratio of mean HBCI width to  width from Bennett's inequality as a function of the RQMC sample size $n$ for the given budgets $N$. The means for the HBCI are taken over the 20 independent replicates. }
    \label{fig:widthstoeb}
\end{figure}

The HBCI are computed without knowledge
of the exact variance $\sigma^2_n$ that the Bennett
interval uses.  They are instead based on estimates
of that variance and they must therefore allow for
uncertainty in the variance estimate.  It is then not
surprising that the resulting intervals are wider
than the Bennett ones. We see in Table~\ref{tab:benvsbetn} that
the HBCI's minimum widths were
not found at larger $n$ than the minimizers
of the Bennett intervals. They were either equal to
or half as large as $n_*$. We also see that empirical
best values of $n$ are no larger for the smooth
function than for the discontinuous one, as expected.

\begin{table}
\centering
\begin{tabular}{rrrrr}
\toprule
&\multicolumn{2}{c}{$1\{x<1/3\}$}&\multicolumn{2}{c}{$xe^{x-1}$}\\
\midrule
$N$ & $n_*$ & $n_{\mathrm{opt}}$ & $n_*$ & $n_{\mathrm{opt}}$\\
\midrule
256 &  4   & 4 & 4 &2\\
1,024&  8   & 4 & 4 &4\\
4,096 & 8   & 8 & 8 &4\\
16,384 & 16 & 16 & 8 &8\\
65,536 & 32 & 16 &16 &8\\
\bottomrule
\end{tabular}
\caption{\label{tab:benvsbetn}
For the given budgets $N$, we show $n_*$ as the
minimizer of the Bennett width \eqref{eq:benh} 
over $n=2^k$ for integer $k$
%for the function $f(x) = 1\{x<1/3\}$. Then 
and $n_{\opt}$ is
the value of $n$ that minimized the average
width of the HBCI
for the given $N$. These are shown for functions
$f(x) =1\{x<1/3\}$ and $f(x)=xe^{x-1}$.
}
\end{table}

\section{Discussion}\label{sec:discussion}
In favorable cases
the RQMC variance is $\tilde O(n^{-3})$ and then
using a fixed number $R$ of replicates of
point sets with $n\to\infty$ we can estimate
the integral $\mu$ with a standard deviation 
of $\tilde O(N^{-3/2})$ for $N=nR$ along with
an unbiased estimate of the variance of that estimate.
It remains very difficult to get a confidence 
interval of width $\tilde O(N^{-3/2})$ \cite{err4qmc}.
Using Student's $t$ with a small $R$ gave good
results in an extensive simulation by \cite{LEcEtal24a}
but that success is not yet theoretically understood.
For an integrand subject to known bounds, we can get
a non-asymptotic confidence interval by using $R$
replicates of $n$ RQMC points using a
predictable plug-in empirical Berstein confidence
interval.  There, if $\sigma^2_n=\Theta(n^{-\theta})$
then the optimal $n$ is $\Theta(N^{1/(\theta+1)})$
and the resulting confidence intervals have width
$\Theta(N^{-\theta/(\theta+1)})$.
While it remains to find a way to choose the optimal $n$
empirically, we have strong theoretical guidance from 
equation~\eqref{eq:guidance}
that we can interpret as an upper bound on $n$ for the
oracle which is then a reasonable upper bound for HBCIs which have
to estimate the RQMC variance. When $n$ must be a
small power of $2$, we can rule out many suboptimal choices.

We have noted several times that there is an asymptotic
equivalence between the Bennett interval widths and those
of the EBCI and HBCI intervals.  In our setting that equivalence is as $R\to\infty$
for fixed $n$. In the RQMC context, the best $n$ is quite small.
Then for large $N$ it is not surprising that an asymptote as $R\to\infty$
is very predictive of our results.

It also remains to find a good way to
use RQMC in confidence sequences as opposed
to confidence intervals.   We might
set up $R$ independent infinite sequences of RQMC
points and then stop them at the first $n$
where they provide a $1-\alpha$ interval
narrower than some $\epsilon$.  In a task
like that it makes sense to only use
$n=2^k$ for integers $k$. 
That is partly because powers of two are good
for scrambled Sobol' points, but also because
in RQMC it is generally advisable to study
sample sizes $n$ that grow geometrically
not arithmetically \cite{sobo:1998}.  The
idea is that if $n$ is not large enough to get
a good answer, then $n+1$ is unlikely to be
much of an improvement.
A confidence sequence based on geometrically
growing $n$ would not have to be as wide as one for arithmetically
growing $n$.

For scrambled Sobol' points it is strongly
advisable to take $n=2^k$ because those are
the best sample sizes and they can even give
a better convergence rate. The Halton sequence \cite{Hal60} does
not have strongly superior sample sizes and so $n$ need
not be a power of two for it.  The 
two scrambles we considered for Sobol' points can
also be applied to the Halton sequence getting
$\var(\hat\mu)=o(1/n)$ without requiring special
sample sizes $n$ \cite{haltongain}.
There are however no published non-trivial settings
where scrambled Halton points can obtain $\theta>2$.

\section*{Acknowledgments}

We thank Aaditya Ramdas and Ian Waudby-Smith for helpful
discussions. We thank an anonymous reviewer for helpful comments.
ABO was supported by the U.S.\ National Science Foundation
grant DMS-2152780.
FJH and AGS were supported by U.S.\ National Science Foundation
grant DMS-2316011.
AGS was supported by the U.S. DOE Office of Science Graduate Student Research Program.
\bibliographystyle{plain}
\bibliography{FJH25copy,FJHown25copy,ebci4rqmc}
\end{document}